\tikzset{
    >=stealth',
    pil/.style={
           ->,
           thick,
           shorten <=2pt,
           shorten >=2pt,}
}
\numberwithin{equation}{section}
\def \be{\begin{equs}}
\def \ee{\end{equs}}
\def \P{\mathbb{P}}
\def \E{\mathbb{E}}
\def \gd{G}
\def \bd{B}
\def \cov{W}
\newtheorem{theorem}{Theorem}[section]
\newtheorem{lemma}[theorem]{Lemma}
\newtheorem{assumptions}[theorem]{Assumptions}
\theoremstyle{plain}
\newtheorem{thm}{Theorem}
\newtheorem*{thm-non}{Theorem}
\theoremstyle{definition}
\newtheorem{defn}[theorem]{Definition}
\newtheorem{remark}[theorem]{Remark}
\begin{document}

\title[Metastability Bounds]{Simple Conditions for Metastability of Continuous Markov Chains}


\author{Oren Mangoubi$^{\flat}$}
\thanks{$^{\flat}$oren.mangoubi@gmail.com, 
\'{E}cole Polytechnique F\'{e}d\'{e}rale de Lausanne (EPFL), 
IC IINFCOM THL3,
1015 Lausanne, Switzerland}

\author{Natesh S. Pillai$^{\ddag}$}
\thanks{$^{\ddag}$pillai@fas.harvard.edu, 
   Department of Statistics,
    Harvard University, 1 Oxford Street, Cambridge
    MA 02138, USA}

\author{Aaron Smith$^{\sharp}$}
\thanks{$^{\sharp}$smith.aaron.matthew@gmail.com, 
   Department of Mathematics and Statistics,
University of Ottawa, 585 King Edward Avenue, Ottawa
ON K1N 7N5, Canada}

 \thanks{NSP is partially supported by an ONR grant. AS and OM were supported by a grant from NSERC}

\maketitle







\begin{abstract}
A family $\{Q_{\beta}\}_{\beta \geq 0}$ of Markov chains is said to exhibit \textit{metastable mixing} with \textit{modes} $S_{\beta}^{(1)},\ldots,S_{\beta}^{(k)}$ if its spectral gap (or some other mixing property) is very close to the worst conductance $\min(\Phi_{\beta}(S_{\beta}^{(1)}), \ldots, \Phi_{\beta}(S_{\beta}^{(k)}))$ of its modes. We give simple sufficient conditions for a family of Markov chains to exhibit metastability in this sense, and verify that these conditions hold for a prototypical Metropolis-Hastings chain targeting a mixture distribution.  Our work differs from existing work on metastability in that, for the class of examples we are interested in, it gives an asymptotically exact  formula for the spectral gap (rather than a bound that can be very far from sharp) while at the same time giving technical conditions that are easier to verify for many statistical examples. Our bounds from this paper are used in a companion paper \cite{mangoubi2018Cheeger} to compare the mixing times of the Hamiltonian Monte Carlo algorithm and a random walk algorithm for multimodal target distributions.
\end{abstract}

\section{Introduction}
It is well known that Markov chains targeting multimodal distributions, such as those that appear in mixture models, will often mix very slowly. Of course, some algorithms are still faster than others, and the present paper is motivated by the problem of comparing different MCMC (Markov Chain Monte Carlo) algorithms in this ``highly multimodal" regime. In this paper, we give a first step in this direction by finding some simple sufficient conditions under which we can find an explicit formula for the spectral gap for MCMC algorithms on multimodal target distributions. To be slightly more precise, we consider a sequence of Markov transition kernels $\{Q_{\beta}\}_{\beta \geq 0}$ with state space $\Omega$ partitioned into pieces $\Omega = \sqcup_{i=1}^{k} S_{\beta}^{(i)}$. One of our main results, Lemma \ref{LemmaMeta2}, gives sufficient conditions under which the spectral gap $\lambda_{\beta}$ of $Q_{\beta}$ is asymptotically given by the worst-case conductance $\Phi_{\min}(\beta) = \min(\Phi_{\beta}(S_{\beta}^{(1)}), \ldots, \Phi_{\beta}(S_{\beta}^{(k)}))$, in the sense:
\be \label{EqProtConc}
\lim_{\beta \rightarrow \infty} \frac{\log(\lambda_{\beta})}{\log(\Phi_{\min}(\beta))} = 1.
\ee 

Our work is closely related to two large pieces of the Markov chain literature: \textit{decomposition bounds} (see \textit{e.g.,}  \cite{woodard2009sufficient, woodard2009conditions,madras2002markov,jerrum2004elementary,pillai2017elementary}) and \textit{metastability bounds} (see \textit{e.g.,} the popular book \cite{olivieri2005large} and the references within the recent articles \cite{beltran2015martingale,landim2018metastable}). Our work differs from existing work in that, for the class of examples we are interested in, it gives an asymptotically exact  formula for the spectral gap (rather than a bound that can be very far from sharp) while at the same time giving technical conditions that are easier to verify for many statistical examples. In particular, we give simple sufficient conditions that work on continuous state spaces). We do not assume that our state space is discrete or compact, that we have precise knowledge of the boundary of the modes, or that we have precise knowledge of the ``typical" trajectories between modes. We believe that our ``in-between" results are a useful compromise that focuses on the most relevant properties for comparison of Markov chains targeting multimodal distributions that arise in a statistical context.

The main heuristic behind our calculations is that, in the highly-multimodal regime, a Markov chain with strongly multimodal stationary distribution will mix \textit{within} its starting mode before travelling between modes. When this occurs, we say that the Markov chain exhibits \textit{metastable behaviour},  and the mixing properties of the Markov chain are often determined by the rate of transition between modes at stationarity (again, see \cite{landim2018metastable} and the references therein). As a prototypical example, we consider the simple mixture of two Gaussians
\be 
\pi_{\sigma} = \frac{1}{2} \mathcal{N}(-1,\sigma^{2}) + \frac{1}{2} \mathcal{N}(1,\sigma^{2})
\ee 
for $\sigma > 0$. When $\sigma$ is close to 0, the usual tuning heuristic for the random walk Metropolis-Hastings (MH) algorithms (see \textit{e.g.,} \cite{roberts1997weak}) suggests using a proposal distribution with standard deviation on the order of $\sigma$, such as:
\be 
K_{\sigma}(x,\cdot) = \mathcal{N}(x,\sigma^{2}).
\ee 
Informally, an MH chain $\{X_{t}\}_{t \geq 0}$ with proposal distribution $K_{\sigma}$, target distribution $\pi_{\sigma}$ and initial point $X_{0} \in [-2,-0.5]$ in one of the modes will evolve according to the following three stages:
\begin{enumerate}
\item For $t$ very small, the law of the chain $X_t$, $\mathcal{L}(X_{t})$, will depend quite strongly on the starting point $X_{0}$.
\item For $\sigma^{-1} \ll t \ll e^{c_{1} \sigma^{-2}}$ and $c_{1} > 0$ small, the chain will have mixed very well on its first mode and is very unlikely to have ever left the interval $(-\infty, -0.1)$, so that:
\be 
\| \mathcal{L}(X_{t}) - \mathcal{N}(-1,\sigma^{2}) \|_{\mathrm{TV}} \ll e^{-c_{2} \sigma^{-1}}
\ee 
for some $c_{2} > 0$.
\item For $t \gg e^{c_{3} \sigma^{-2}}$, the chain will have mixed well on the entire state space in the sense that
\be 
\| \mathcal{L}(X_{t}) - \pi_{\sigma} \|_{\mathrm{TV}} \ll e^{-c_{3} \sigma^{-1}}
\ee 
for some $c_{3} > 0$.
\end{enumerate}

In the context of this example, the main result of our work is a straightforward way to verify that there is a sharp transition around $t \approx e^{\frac{1}{2} \sigma^{-2}}$, so that we may take $c_{1} = c_{3} = \frac{1}{2}$ in this heuristic description (see Theorems \ref{ThmRwmMultimodal1}, \ref{ThmRwmMultimodal2} for a precise statement). In the notation of Equality \eqref{EqProtConc}, we can take the parameter $\beta$  that indexes our chains  to be equal to $\sigma^{-1}$. We view $\beta$ as indexing ``how multimodal" a chain is, while in this particular example $\sigma^{-1}$ measures both the width of each mode \textit{and} how well-separated they are.

We believe that these scaling exponents $c_{1},c_{3}$ are natural ways to measure performance in the highly-multimodal regime; see our companion paper \cite{mangoubi2018Cheeger} for further discussion of this point and relationships to the literature on optimal scaling and lifted Markov chains.

\subsection{Guide to Paper}

In Section \ref{SecAlgDefs}, we review basic notation and definitions, and also provide some simple bounds. Our main results on metastability are in Section \ref{SecGenMetaBd}. Finally, we give an illustrative application in Section \ref{SecAppl}.

\section{Preliminaries} \label{SecAlgDefs}

\subsection{Basic Notation}
Throughout the remainder of the paper, we denote by $\pi$ the smooth density function of a probability distribution on a convex subset of $\mathbb{R}^{d}$. We denote by $\mathcal{L}(X)$ the distribution of a random variable $X$. Similarly, if $\mu$ is a probability measure, we write ``$X \sim \mu$" for ``$X$ has distribution $\mu$."  Throughout, we will generically let $Q \sim \pi$ and $P \sim \mathcal{N}(0, \mathrm{Id})$ be independent random variables, where $\mathrm{Id}$ is the $d$-dimensional identity matrix.

 For two nonnegative functions or sequences $f,g$, we write $f = O(g)$ as shorthand for the statement: there exist constants $0 < C_{1},C_{2} < \infty$ so that for all $x > C_{1}$, we have $f(x) \leq C_{2} \, g(x)$. We write $f = \Omega(g)$ for $g = O(f)$, and we write $f = \Theta(g)$ if both $f= O(g)$ and $g=O(f)$. Relatedly, we write $f = o(g)$ as shorthand for the statement: $\lim_{x \rightarrow \infty} \frac{f(x)}{g(x)} = 0$.  Finally, we write $f = \tilde{O}(g)$ if there exist constants $0 < C_{1},C_{2}, C_{3} < \infty$ so that for all $x > C_{1}$, we have $f(x) \leq C_{2} \, g(x) \log(x)^{C_{3}}$, and write $f = \tilde{\Omega}(g)$ for $g = \tilde{O}(f)$. As shorthand, we say that a function $f$ is ``bounded by a polynomial" if there exists a polynomial $g$ such that $f = O(g)$.

\subsection{Cheeger's inequality and the spectral gap}

We recall the basic definitions used to measure the efficiency of MCMC algorithms. Let $L$ be a reversible transition kernel with unique stationary distribution $\mu$ on $\mathbb{R}^{d}$. It is common to view $L$ as an operator from $L_{2}(\pi)$ to itself via the following formula:
\be
(L  f)(x) = \int_{y \in \mathbb{R}^{d}} L(x,dy) f(y).
\ee
The constant function is always an eigenfuncton of this operator, with eigenvalue 1. We define the space $W^{\perp} = \{ f \in L_{2}(\mu) \, : \, \int_{x} f(x) \mu(dx) = 0\}$ of functions that are orthogonal to the constant function, and denote by $L^{\perp}$ the restriction of the operator $L$ to the space $W^{\perp}$. We then define the \textit{spectral gap} $\rho$ of $L$ by the formula 
\be
\rho = \rho(L) \equiv 1 - \sup \{ |\lambda| \, : \, \lambda \in \mathrm{Spectrum}(L^{\perp}) \},
\ee
where $\mathrm{Spectrum}$ refers to the usual spectrum of an operator. If $L^{\perp}$ has a largest eigenvalue $|\lambda|$ (for example, if $L$ is a matrix of a finite state space Markov Chain), then $\rho = 1-| \lambda |$.  

Cheeger's inequality \cite{cheeger1970lower,lawler1988bounds} provides bounds for the spectral gap in terms of the ability of $L$ to move from any set to its complement in a single step. This ability is measured by the conductance $\Phi(L)$, which is defined by the pair of equations
\be 
\Phi(L) = \inf_{S  \in \mathcal{A} \, : \, 0 < \mu(S) < \frac{1}{2}} \Phi(L,S)
\ee
and 
\be 
\Phi(L,S) = \frac{ \int_{x} \mathbbm{1}\{x \in S\} L(x,S^{c}) \mu(dx)}{\mu(S) },
\ee 
where $\mathcal{A}= \mathcal{A}(\mathbb{R}^{d})$ denote the usual collection of Lebesgue-measurable subsets of $\mathbb{R}^{d}$.
Cheeger's inequality for reversible Markov chains, first proved in \cite{lawler1988bounds}, gives:
\begin{equation} \label{IneqCheegPoin}
\frac{\Phi(L)^2}{2} \leq \rho(L) \leq 2 \Phi(L).
\end{equation}

\subsection{Traces and Hitting Times}

We recall some standard definitions related to Markov processes.

\begin{defn} [Trace Chain]
Let  $K$ be the transition kernel of an ergodic Markov chain on state space $\Omega$ with stationary measure $\mu$, and let $S \subset \Omega$ be a subset with $\mu(S) > 0$. Let $\{X_{t}\}_{t \geq 0}$ be a Markov chain evolving according to $K$, and iteratively define
\be 
c_{0} &= \inf \{t \geq 0 \, : \, X_{t} \in S \} \\
c_{i+1} &= \inf \{t > c_{i} \, : \, X_{t} \in S \}.
\ee 
Then 
\be \label{EqTraceCoup}
\hat{X}_{t} = X_{c_{t}}, \quad t \geq 0
\ee 
is the \textit{trace of $\{X_{t}\}_{t \geq 0}$ on $S$}. Note that $\{\hat{X}_{t}\}_{t \geq 0}$ is a Markov chain with state space $S$, and so this procedure also defines a transition kernel with state space $S$. We call this kernel the \textit{trace of the kernel $K$ on $S$}.
\end{defn}

\begin{defn} [Hitting Time]
Let $\{X_{t}\}_{t \geq 0}$ be a Markov chain with initial point $X_{0} = x$ and let $S$ be a measurable set. Then 
\be 
\tau_{x,S} = \inf \{t \geq 0 \, : \, X_{t} \in S\}
\ee 
is called the \textit{hitting time} of $S$.
\end{defn}

\section{Generic Metastability Bounds} \label{SecGenMetaBd}

Denote by $\{Q_{\beta}\}_{\beta \geq 0}$ the transition kernels of ergodic Markov chains with stationary measures $\{ \mu_{\beta}\}_{ \beta \geq 0}$ on common state space $\Omega$, which we take to be a convex subset of $\mathbb{R}^d$. Throughout, we will always use the subscript $\beta$ to indicate which chain is being used - for example, $\Phi_{\beta}(S)$ is the conductance of the set $S$ with respect to the chain $Q_{\beta}$. For any set $S$ with $\pi_{\beta}(S) > 0$, define the restriction $\pi_{\beta} |_{S}$ of $\pi_{\beta}$ to $S$
\be 
\pi_{\beta} |_{S}(A) = \frac{\pi_{\beta}(S \cap A)}{\pi_{\beta}(S)}.
\ee 

Our two main results are:

\begin{enumerate}
\item In Lemma \ref{LemmaMeta1}, we fix a set $S \subset \Omega$ and give sufficient conditions for the \textit{worst-case} hitting time of $S^{c}$ from $S$ to be bounded by the \textit{average-case} hitting time $\Phi_{\beta}(S)$.
\item In Lemma \ref{LemmaMeta2}, we consider sufficient conditions on the entire partition $S^{(1)},\ldots,S^{(k)}$ to ensure that the \textit{spectral gap} of $Q_{\beta}$ is approximately equal to the \textit{worst-case conductance} $\min_{1 \leq i \leq k} \Phi_{\beta}(S^{(i)})$. 
\end{enumerate}

\subsection{Metastability and Hitting Times}

The main point of our first set of assumptions is to guarantee that the Markov chain cannot get ``stuck" for a long time before mixing within a mode $S$. Fix $S \subset \Omega$ with $\inf_{\beta \geq 0} \pi_{\beta}(S) \equiv c_{1} > 0$ and, for all $\beta \geq 0$, let $\gd_{\beta}, \bd_{\beta}, \cov_{\beta} \subset S$ satisfy 
\be 
\gd_{\beta} \subset \cov_{\beta}, \qquad \bd_{\beta} \subset \cov_{\beta}^{c}.
\ee 
In the following assumption, we think of the set $\gd_{\beta}$ as the points that are ``deep within" the mode $S$, the points $\bd_{\beta}$ as the points that are ``far in the tails" of the target distribution, and the ``covering set" $\cov_{\beta}$ as a way of separating these two regions.

\begin{assumptions} \label{AssumptionsMeta1}
 We assume the following all hold for $\beta > \beta_{0}$ sufficiently large:
\begin{enumerate}
\item \textbf{Small Conductance:} There exists some $c > 0$ such that $\Phi_{\beta}(S) \leq e^{-c \beta}$.
\item \textbf{Rapid Mixing Within $\gd_{\beta}$:}  Let $\hat{Q}_{\beta}$ be the Metropolis-Hastings chain with proposal kernel $Q_{\beta}$ and target distribution $\pi_{\beta} |_{S}$. There exists some function $r_{1}$ bounded by a polynomial such that 
\be  \label{IneqMeta1MixingRestrictions}
\sup_{x \in \gd_{\beta}} \| \hat{Q}_{\beta}^{r_{1}(\beta)}(x,\cdot) - \pi_{\beta} |_{S}(\cdot) \|_{\mathrm{TV}} \leq \beta^{-2} \Phi_{\beta}(S).
\ee 
\item \textbf{Never Stuck In $\cov_{\beta}\backslash \gd_{\beta}$:} There exists some function $r_{2}$ bounded by a polynomial such that 
\be \label{IneqNeverStuckInEscM1}
\sup_{x \in \cov_{\beta}\backslash \gd_{\beta}} \P[\tau_{x, \gd_{\beta} \cup S^{c}} > r_{2}(\beta)] \leq  \beta^{-2} \Phi_{\beta}(S).
\ee 
\item \textbf{Never Hitting $\cov_{\beta}^{c}$:} We have 
\be \label{IneqNeverHittingBad1}
\sup_{x \in \gd_{\beta} } \P[\tau_{x, \cov_{\beta}^{c}} < \min( r_{1}(\beta) + r_{2}(\beta) + 1, \tau_{x,S^{c}})] \leq \Phi_{\beta}(S)^{4}.
\ee 
\end{enumerate}
\end{assumptions}

Under these assumptions, we have the conclusion:

\begin{lemma} [Hitting Times and Conductance] \label{LemmaMeta1}
Let Assumptions \ref{AssumptionsMeta1} hold, and fix a point $x$ that is in  $\gd_{\beta} $ for all $\beta > \beta_{0}(x)$ sufficiently large. Then for all $\epsilon > 0$,
\be 
\P \left[ \frac{\log(\tau_{x,S^{c}})}{\log(\Phi_{\beta}(S))} > 1 + \epsilon \right] = o(1).
\ee 

\end{lemma}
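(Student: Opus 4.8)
The plan is a regeneration (renewal) argument. Since $\log(\Phi_\beta(S))<0$, it suffices to prove the high-probability \emph{upper} bound $\P\bigl[\tau_{x,S^c}>\Phi_\beta(S)^{-(1+\epsilon)}\bigr]=o(1)$ on the escape time --- this is the content the four hypotheses are designed to deliver. Starting $\{X_t\}$ at $x\in\gd_\beta$, I split the trajectory into a sequence of ``attempts'', each of length at most $r_1(\beta)+r_2(\beta)+1$, and show that, conditionally on the past, every attempt escapes $S$ with probability at least $\Phi_\beta(S)(1-\beta^{-2})$, \emph{uniformly over its (random) starting point in $\gd_\beta$}. A stochastic-domination bound then shows one of the first $M:=\lceil\beta\,\Phi_\beta(S)^{-1}\rceil$ attempts succeeds with probability $1-o(1)$; on that event $\tau_{x,S^c}<\sigma_M\le M(r_1(\beta)+r_2(\beta)+1)=\Phi_\beta(S)^{-1}\,\mathrm{poly}(\beta)$, and since $\Phi_\beta(S)\le e^{-c\beta}$ by the Small Conductance hypothesis, the polynomial overhead is at most $\Phi_\beta(S)^{-\epsilon}$ for all large $\beta$, giving $\tau_{x,S^c}\le\Phi_\beta(S)^{-(1+\epsilon)}$.

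\emph{Per-attempt escape probability.} Couple $\{X_t\}$ with the censored chain $\hat Q_\beta$ so that the two agree until $\{X_t\}$ first leaves $S$: for moves within $S$ the Metropolis--Hastings acceptance ratio for target $\pi_\beta|_S$ equals the one for $\pi_\beta$ (the $\pi_\beta(S)$ factors cancel), hence equals $1$ by reversibility of $Q_\beta$, so $\hat Q_\beta$ differs from $Q_\beta$ only by censoring the exit move. Run an attempt from an arbitrary $x'\in\gd_\beta$ for $r_1(\beta)$ steps: either $\{X_t\}$ has already left $S$ (a success), or it has stayed in $S$ and so equals the $\hat Q_\beta$-chain, whose law $\nu$ after $r_1(\beta)$ steps satisfies $\tv{\nu-\pi_\beta|_S}\le\beta^{-2}\Phi_\beta(S)$ by \eqref{IneqMeta1MixingRestrictions}. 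Peeling off the ``already escaped'' event and then using $\int_S Q_\beta(y,S^c)\,\pi_\beta|_S(dy)=\Phi_\beta(S)$ (the definition of conductance) together with $\|Q_\beta(\cdot,S^c)\|_\infty\le1$, one gets that the chain leaves $S$ within $r_1(\beta)+1$ steps with probability at least $\Phi_\beta(S)(1-\beta^{-2})$; the supremum over $\gd_\beta$ in \eqref{IneqMeta1MixingRestrictions} makes this uniform in $x'$.

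\emph{Stringing the attempts together.} Set $\sigma_0=0$; given $\sigma_j$ with $X_{\sigma_j}\in\gd_\beta$, run an attempt for $r_1(\beta)+1$ steps, and if it succeeds we are done. Otherwise, by \eqref{IneqNeverHittingBad1} (applied from $X_{\sigma_j}\in\gd_\beta$) with probability at least $1-\Phi_\beta(S)^4$ the chain has not left $\cov_\beta$ during the cycle before escaping $S$, so after the failed attempt it lies in $\cov_\beta\cap S$; if it is already in $\gd_\beta$ put $\sigma_{j+1}$ equal to the current time, and otherwise \eqref{IneqNeverStuckInEscM1} gives that with probability at least $1-\beta^{-2}\Phi_\beta(S)$ it reaches $\gd_\beta\cup S^c$ within $r_2(\beta)$ more steps --- escaping (done) or returning to $\gd_\beta$, at which point we set $\sigma_{j+1}$; thus $\sigma_{j+1}-\sigma_j\le r_1(\beta)+r_2(\beta)+1$. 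By the strong Markov property and the uniformity above, on the event that the first $j-1$ cycles ran without an auxiliary failure we have $\P[\text{attempt }j\text{ succeeds}\mid\mathcal F_{\sigma_{j-1}}]\ge\Phi_\beta(S)(1-\beta^{-2})$, and a standard peeling bound over the first $M$ cycles then yields $\P\bigl[\tau_{x,S^c}>M(r_1(\beta)+r_2(\beta)+1)\bigr]\le(1-\Phi_\beta(S)(1-\beta^{-2}))^M+M\bigl(\Phi_\beta(S)^4+\beta^{-2}\Phi_\beta(S)\bigr)\le e^{-\beta(1-\beta^{-2})}+\beta\,\Phi_\beta(S)^3+\beta^{-1}=o(1)$, which together with the first paragraph finishes the proof after taking logarithms.

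\emph{Main obstacle.} The delicate step is the regeneration argument of the previous paragraph: a failed attempt does not return $\{X_t\}$ to a fixed point, so the attempts are not genuinely i.i.d., and one must combine the strong Markov property with the fact that the per-attempt escape lower bound holds uniformly over \emph{all} possible restart positions in $\gd_\beta$ --- this is precisely why \eqref{IneqMeta1MixingRestrictions}--\eqref{IneqNeverHittingBad1} are stated as suprema over $\gd_\beta$ and $\cov_\beta\backslash\gd_\beta$. Two secondary technicalities: (i) justifying the $Q_\beta$--$\hat Q_\beta$ coupling (reversibility) and applying \eqref{IneqMeta1MixingRestrictions} to the \emph{unconditioned} law of the $\hat Q_\beta$-chain intersected with ``no exit yet'', not to a conditioned law; and (ii) the bookkeeping needed to see that all four parts of Assumptions~\ref{AssumptionsMeta1} are used, the quantitative bound $\Phi_\beta(S)\le e^{-c\beta}$ entering only to absorb the $\mathrm{poly}(\beta)$ overhead and to send all the error probabilities to $o(1)$.
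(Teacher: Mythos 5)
Your proof is correct and follows essentially the same route as the paper's: the same per-attempt escape bound $\Phi_\beta(S)(1-O(\beta^{-2}))$ over blocks of length $r_1(\beta)+r_2(\beta)+1$ obtained by coupling with the restricted chain and using $\int_S Q_\beta(y,S^c)\,\pi_\beta|_S(dy)=\Phi_\beta(S)$, the same iteration via the strong Markov property with the $\cov_\beta^c$-excursions union-bounded separately, the same choice $k=\lceil\beta\,\Phi_\beta(S)^{-1}\rceil$, and the same use of the small-conductance hypothesis to absorb the polynomial overhead. The only (immaterial) difference is bookkeeping: the paper restarts each block from anywhere in $\cov_\beta$ and folds the ``never stuck'' error into the per-block success probability, whereas you restart from $\gd_\beta$ and union-bound that error separately.
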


\begin{proof}
Let $p \in \cov_{\beta}$, let $\{X_{t}\}_{t \geq 0}$ be a Markov chain with transition kernel $Q_{\beta}$ started at $X_{0} = p$, and let $\{Y_{t}\}_{t \geq 0}$ be a Markov chain with transition kernel $Q_{\beta}$ started with distribution $Y_{0} \sim \pi_{\beta}|_{S}$. Finally, define $T = T(\beta) = r_{1}(\beta) + r_{2}(\beta) + 1$.

We then calculate: 

\be 
\P[\tau_{p,S^{c}} \leq T ] &\geq \inf_{q \in \gd_{\beta}} \P[\tau_{q,S^{c}} \leq r_{1}(\beta) + 1] -  \P[X_{r_{2}(\beta)} \notin \gd_{\beta} \cup S^{c}] \\
&\geq \inf_{q \in \gd_{\beta}} \P[\tau_{q,S^{c}} \leq r_{1}(\beta) + 1] -  \P[\tau_{p, \gd_{\beta} \cup S^{c}} > r_{2}(\beta)] \\
&\geq \P[Y_{1} \in S^{c}] - \sup_{q \in \gd_{\beta}} \| \hat{Q}_{\beta}^{r_{1}(\beta)}(q,\cdot) - \pi_{\beta} |_{S}(\cdot) \|_{\mathrm{TV}} -   \P[\tau_{p, \gd_{\beta} \cup S^{c}} > r_{2}(\beta)] \\
&= \Phi_{\beta}(S) - \sup_{q \in \gd_{\beta}} \| \hat{Q}_{\beta}^{r_{1}(\beta)}(q,\cdot) - \pi_{\beta} |_{S}(\cdot) \|_{\mathrm{TV}} -    \P[\tau_{p, \gd_{\beta} \cup S^{c}} > r_{2}(\beta)]
\ee 

Applying parts \textbf{(2)} and \textbf{(3)} of Assumption \ref{AssumptionsMeta1} to bound the size of the negative terms, this implies 

\be \label{IneqMeta1Hit1}
\P[\tau_{p,S^{c}} \leq T]  \geq \Phi_{\beta}(S) (1 - 2 \beta^{-2}).
\ee 

If we also have $p \in \gd_{\beta} \subset \cov_{\beta}$, then applying part \textbf{(4)} of Assumption \ref{AssumptionsMeta1} gives 
\be \label{IneqMeta1Hit2}
\P[\tau_{p,\cov_{\beta}^{c}} \leq \min(T, \tau_{p,S^{c}})] \leq \Phi_{\beta}(S)^{4}. 
\ee 

We now iteratively apply Inequalities \eqref{IneqMeta1Hit1} and \eqref{IneqMeta1Hit2} to control the behaviour of $\{X_{t}\}_{t \geq 0}$ over longer time intervals. More precisely, for all $k \in \mathbb{N}$  and starting points  $p \in  \gd_{\beta}$, we have: 

\be 
\P[\tau_{p,S^{c}} > kT] &= \P[\tau_{p,S^{c}} > kT | \tau_{p,S^{c}} > (k-1)T, \, X_{(k-1)T} \in \cov_{\beta}] \P[ \tau_{p,S^{c}} > (k-1)T, \, X_{(k-1)T} \in \cov_{\beta} ] \\
& \qquad + \P[\tau_{p,S^{c}} > kT | \tau_{p,S^{c}} > (k-1)T, \, X_{(k-1)T} \in \cov_{\beta}^{c} ] \P[\tau_{p,S^{c}} > (k-1)T, \, X_{(k-1)T} \in \cov_{\beta}^{c} ] \\
&\leq  \P[\tau_{p,S^{c}} > kT | \tau_{p,S^{c}} > (k-1)T, \, X_{(k-1)T} \in \cov_{\beta}] \P[\tau_{p,S^{c}} > (k-1)T] \\
& \qquad +  \P[\tau_{p,S^{c}} > (k-1)T, \, X_{(k-1)T} \in \cov_{\beta}^{c}] \\
& \leq (1 - \Phi_{\beta}(S)(1 - 2 \beta^{-2})) \P[\tau_{p,S^{c}} > (k-1)T] +  \P[\tau_{p,S^{c}} > (k-1)T, \, X_{(k-1)T} \in \cov_{\beta}^{c}] \\
& \leq (1 - \Phi_{\beta}(S)(1 - 2 \beta^{-2}))\P[\tau_{p,S^{c}} > (k-1)T] +  k\Phi_{\beta}(S)^{4}, \\
\ee 
where Inequality \eqref{IneqMeta1Hit1} is used in the second-last line and Inequality \eqref{IneqMeta1Hit2} is used in the last line. Iterating and collecting terms, this gives
\be \label{IneqMeta1Hit3}
\P[\tau_{p,S^{c}} > kT] \leq (1 - \Phi_{\beta}(S)(1 - 2 \beta^{-2}))^{k} + k^{2} \Phi_{\beta}(S)^{4}.
\ee 

Fix any $\epsilon > 0$ and take $k = \lceil \beta \Phi_{\beta}(S)^{-1} \rceil$. By Part \textbf{(1)} of Assumption \ref{AssumptionsMeta1}, $k T \leq \Phi_{\beta}(S)^{1+\epsilon}$ for all $\beta > \beta_{0}(\epsilon)$ sufficiently large. Thus,  we can use Inequality \eqref{IneqMeta1Hit3} to conclude
\be 
\P[ \frac{\log(\tau_{x,S^{c}})}{\log(\Phi_{\beta}(S))} > 1 + \epsilon] &= \P[\tau_{x,S^{c}} > \Phi_{\beta}(S)^{1 + \epsilon}] \\
& \leq \P[\tau_{x,S^{c}} > k T]  \\
&\leq (1 - \Phi_{\beta}(S)(1 - 2 \beta^{-2}))^{k} + k^{2} \Phi_{\beta}(S)^{4} = o(1).
\ee 

This completes the proof of the lemma. 

\end{proof}

\subsection{Metastability and Spectral Gaps}

If one can partition the state space of a Markov chain into a collection of sets $S^{(1)},\ldots,S^{(k)}$ satisfying Assumption \ref{AssumptionsMeta1}, one typically expects the spectral gap of the Markov chain to be entirely determined by the typical transition rates between these sets. However, we must rule out a few possible sources of bad behaviour:

\begin{enumerate}
\item Very slow mixing in the ``tails" of the distribution could have an impact on the spectral gap.
\item A typical transition from one mode could land far out in the tails of the mode being entered, causing the walk to get ``stuck."
\item The transitions between modes might exhibit near-periodic behaviour, even if the Markov chain is not exactly periodic.
\item There might be metastability among \textit{collections} of modes. For example, there might be some $I \subset \{1,2,\ldots,k\}$ for which $\Phi_{\beta}(\cup_{i \in I} S^{(i)})$ is much smaller than $\min_{1 \leq i \leq k} \Phi_{\beta}(S^{(i)})$.
\end{enumerate}

Although detailed discussion of metastability is beyond the scope of the present paper, the first three types of behaviour can all cause the spectral gap to be very different from the prediction given by our metastability heuristic. The fourth behaviour simply says that you have chosen the ``wrong" partition of the state space, and that you should check the conditions again after joining several pieces of the partition together.

The following assumptions rule out these new complications:

\begin{assumptions} \label{AssumptionsMeta2}
Let $\Omega = \sqcup_{i=1}^{k} S^{(i)}$ be a partition of $\Omega$ into $k$ pieces. Set $\Phi_{\min} = \min(\Phi_{\beta}(S^{(1)}),\ldots, \Phi_{\beta}(S^{(k)}))$ and $\Phi_{\max} = \max(\Phi_{\beta}(S^{(1)}), \ldots, \Phi_{\beta}(S^{(k)}))$. Assume that:

\begin{enumerate}
\item \textbf{Metastability of Sets:} Each set $S^{(i)}$ satisfies Assumption \ref{AssumptionsMeta1} (with $\Phi_{\beta}(S)$ replaced by $\Phi_{\max}$ in Part \textbf{(1)} and replaced by $\Phi_{\min}$ in Parts \textbf{(2-4)}). We use the superscript $(i)$ to extend the notation of that assumption in the obvious way.
\item \textbf{Lyapunov Control of Tails:} 
Denote by $B_{r}(x)$ the ball of radius $r > 0$ around a point $x \in \Omega$. Assume there exist $0 < m, M < \infty$ satisfying
\be \label{IneqLyapContain}
\cup_{i=1}^{k} \cov_{\beta}^{(i)} \subset B_{M}(0), \qquad
B_{m}(0) \subset \cup_{i=1}^{k} \gd_{\beta}^{(i)}.
\ee 
Assume there exist a collection of privileged points $s_{i} \in \gd_{\beta}^{(i)}$ such that the function $V_{\beta}(x) = e^{\beta \min_{1 \leq i \leq k} \| x - s_{i} \|}$ satisfies
\be \label{IneqLyapMain}
(Q_{\beta} V_{\beta})( x) \leq (1 - \frac{1}{r_{3}(\beta)}) V_{\beta}( x) + r_{4} e^{\ell \beta}
\ee 
for all $x \in \Omega$, where $r_{3}, r_{4}$ are bounded by polynomials and $0 \leq \ell < m$. 
\item \textbf{Never Hitting $\cov_{\beta}^{c}$:} We have the following variant of Inequality \eqref{IneqNeverHittingBad1}:
\be \label{IneqNeverHittingBad2}
\sup_{x \in \cup_{i=1}^{k} \gd_{\beta}^{(i)}}  \P[\tau_{x, (\cup_{i=1}^{k}\cov_{\beta}^{(i)})^{c}}  < \Phi_{\min}^{-2}] &\leq \Phi_{\min}^{4}.
\ee 

\item \textbf{Non-Periodicity:} For all $1 \leq i \neq j \leq k$, 
\be \label{IneqNP1}
\inf_{\beta} \inf_{x \in S^{(i)} } Q_{\beta}(x,S^{(i)}) \equiv c_{2}^{(i)} > 0, 
\ee 
and,
\be \label{IneqNP2}
\sup_{x \in S^{(i)}} Q_{\beta}(x,S^{(j)} \backslash \gd_{\beta}^{(j)}) < \Phi_{\min}^{4}.
\ee 

\item \textbf{Connectedness:} There exists some $r_{5}$ bounded by a polynomial so that the graph with vertex set $\{1,2,\ldots,k\}$ and edge set
\be \label{IneqStrongCon}
\{ (i,j) \, : \,\min ( \inf_{x \in S^{(i)}} \P[X_{\tau_{x,(S^{(i)})^{c}}} \in S^{(j)}],   \inf_{x \in S^{(j)}} \P[X_{\tau_{x,(S^{(j)})^{c}}} \in S^{(i)}]) \geq r_{5}(\beta) \}
\ee 
is connected.
\end{enumerate} 
\end{assumptions}

 We then have:

\begin{lemma} [Spectral Gap and Conductance] \label{LemmaMeta2}
Let Assumptions \ref{AssumptionsMeta2} hold. Denote by $\lambda_{\beta}$ and $\Phi_{\beta}$ the spectral gap and conductance of $Q_{\beta}$. Then
\be \label{EqSpecGapCondConc}
\lim_{\beta \rightarrow \infty} \frac{\log(\lambda_{\beta})}{\log(\Phi_{\min})} = \lim_{\beta \rightarrow \infty} \frac{\log(\Phi_{\beta})}{\log(\Phi_{\min})} = 1.
\ee 

\end{lemma}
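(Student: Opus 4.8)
The plan is to establish the two equalities in \eqref{EqSpecGapCondConc} by separately proving the chain of bounds
$$ \Phi_{\min}^{1+o(1)} \leq \lambda_{\beta} \leq 2\Phi_{\beta} \leq 2 \Phi_{\max}^{1+o(1)} = \Phi_{\min}^{1+o(1)}, $$
where the middle inequality is Cheeger \eqref{IneqCheegPoin}, the last equality will follow because the Connectedness assumption forces all the $\Phi_\beta(S^{(i)})$ to have the same logarithmic order. Concretely I would first show $\log(\Phi_{\max})/\log(\Phi_{\min}) \to 1$: if some mode $S^{(j)}$ had conductance polynomially (in the exponent) larger than another, the edge-weight lower bound $r_5(\beta)$ in \eqref{IneqStrongCon} together with the fact that escape from $S^{(i)}$ lands in a neighbour with probability $\geq r_5(\beta)/\text{(number of modes)}$ would let one build a test set (a union of modes along a path in the connectivity graph) witnessing a conductance much smaller than $\Phi_{\min}$ — contradiction, or more cleanly, one argues $\Phi_\beta(S^{(i)}) \geq r_5(\beta)\,\Phi_{\min}/\text{poly}$ by comparing the flow out of $S^{(i)}$ to the flow across the global worst cut. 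This reduces everything to showing $\log(\lambda_\beta)/\log(\Phi_{\min}) \to 1$ and $\log(\Phi_\beta)/\log(\Phi_{\min}) \to 1$.

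The upper bound $\lambda_\beta \leq 2\Phi_\beta \leq 2\Phi_{\min}$ is immediate from Cheeger applied to a single worst mode, giving $\limsup \log(\lambda_\beta)/\log(\Phi_{\min}) \geq 1$ and similarly for $\Phi_\beta$. The substance is the lower bound $\lambda_\beta \geq \Phi_{\min}^{1+\epsilon}$ for every $\epsilon>0$ and all large $\beta$ (equivalently an upper bound on the relaxation time), for which I would pass through hitting times and a comparison argument. Using Lemma \ref{LemmaMeta1} applied to each $S^{(i)}$, every starting point deep in a mode reaches $S^{(i)c}$ in time $\Phi_{\min}^{-1-o(1)}$ with high probability; Assumption \ref{AssumptionsMeta2}(2), the Lyapunov drift condition on $V_\beta(x) = e^{\beta \min_i \|x - s_i\|}$, controls the return to $\cup_i B_m(0) \subset \cup_i \gd_\beta^{(i)}$ from the tails in polynomially many steps up to an $e^{\ell\beta}$ error term with $\ell < m$, so that the chain cannot waste more than a negligible amount of time outside the ``deep'' region; Assumption \ref{AssumptionsMeta2}(3) rules out the walk slipping out of $\cup_i \cov_\beta^{(i)}$ during a window of length $\Phi_{\min}^{-2}$; and Non-Periodicity \eqref{IneqNP1}, \eqref{IneqNP2} plus Connectedness \eqref{IneqStrongCon} ensure that the induced jump chain on $\{1,\dots,k\}$ — the trace of $Q_\beta$ on $\cup_i \gd_\beta^{(i)}$, coarse-grained to which mode you are in — is an aperiodic, irreducible chain on $k$ states whose transition probabilities are all $\Phi_{\min}^{o(1)}$-comparable, hence has relaxation time $\Phi_{\min}^{o(1)}$ (in fact $O(1)$ after the uniformizing). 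Combining these, the maximal expected hitting time of any mode from any deep point is $\Phi_{\min}^{-1-o(1)}$; by the standard relation between maximal hitting times and the spectral gap for reversible chains (e.g. $\lambda_\beta \geq (\max_{x,A} \E_x[\tau_A])^{-1}$ up to constants, or a direct Dirichlet-form comparison with the coarse-grained chain via the decomposition/Caracciolo–Pelissetto–Sokal bound $\lambda_\beta \gtrsim \bar\lambda \cdot \min_i \lambda(\hat Q_\beta^{(i)})$), we get $\lambda_\beta \geq \Phi_{\min}^{1+o(1)}$, and the same hitting-time control gives $\Phi_\beta \geq \Phi_{\min}^{1+o(1)}$ by bounding the escape probability from an arbitrary set $A$ of measure $\leq 1/2$.

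The main obstacle I anticipate is gluing the within-mode mixing (which is stated only for points in $\gd_\beta^{(i)}$, via the Metropolis–Hastings restriction chain $\hat Q_\beta$, not the true chain $Q_\beta$) to the tail control (stated via the Lyapunov function for $Q_\beta$ on all of $\Omega$) into a single clean statement about $Q_\beta$: one must show that a trajectory started anywhere returns to some $\gd_\beta^{(i)}$ quickly, does not exit $\cup_i \cov_\beta^{(i)}$ while doing so except with probability $\Phi_{\min}^{4-o(1)}$, and that the error in replacing $Q_\beta$-dynamics-within-$S^{(i)}$ by the Metropolis–Hastings restriction $\hat Q_\beta$-dynamics is absorbed by Assumption \ref{AssumptionsMeta1}(4)/\ref{AssumptionsMeta2}(3). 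Carefully propagating the $\beta^{-2}\Phi_{\min}$ and $\Phi_{\min}^4$ error budgets through the union over $k$ modes and over the $\Phi_{\min}^{-1-\epsilon}$ time steps — while keeping the cumulative error $o(1)$ — is the delicate bookkeeping, and is exactly the reason the assumptions are stated with a comfortable polynomial/power-of-$\Phi_{\min}$ margin rather than sharp constants.
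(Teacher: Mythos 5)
Your high-level skeleton matches the paper's: Cheeger's inequality supplies the easy direction $\lambda_{\beta} \leq 2\Phi_{\beta} \leq 2\Phi_{\min}$, and the substance is a lower bound $\lambda_{\beta} \geq \Phi_{\min}/\mathrm{poly}(\beta)$. You also correctly flag the real difficulty (mixing is only assumed from $\gd_{\beta}^{(i)}$ via the restricted Metropolis--Hastings chain, while tail control comes from a Lyapunov function for $Q_{\beta}$ on all of $\Omega$). But the step that actually converts this picture into a spectral gap bound is missing, and the two tools you invoke for it do not work here. The relation $\lambda_{\beta} \geq (\max_{x,A}\E_{x}[\tau_{A}])^{-1}$ is not a valid general bound (the hitting-time characterizations of relaxation/mixing time involve hitting times of sets of stationary measure bounded below, require reversibility-specific arguments, and in this non-compact continuous setting the supremum over $x$ is not even finite without the drift condition); and the decomposition/state-decomposition bound $\lambda_{\beta} \gtrsim \bar\lambda \cdot \min_{i}\lambda(\hat Q_{\beta}^{(i)})$ needs a spectral gap for each restricted chain $\hat Q_{\beta}^{(i)}$ on \emph{all} of $S^{(i)}$, which Assumption \ref{AssumptionsMeta1}(2) deliberately does not give you -- it only controls mixing started from $\gd_{\beta}^{(i)}$, and slow mixing in the tails of a mode is exactly one of the failure modes the lemma must rule out. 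The paper's actual bridge is a two-stage coupling of chains started from $x,y \in R = \cup_{i}\gd_{\beta}^{(i)}$: first run independently until both chains sit in the same $\gd_{\beta}^{(i)}$ (time $\mathrm{poly}(\beta)\,\Phi_{\min}^{-1}$ with probability $\geq 1/\mathrm{poly}(\beta)$, using \eqref{IneqMeta1Hit3}, Non-Periodicity and Connectedness), then collide them via a maximal coupling of the restricted chains, using \eqref{IneqNeverHittingBad2} to transfer the coupling back to $Q_{\beta}$. This yields a minorization condition on the small set $R$ at time $T_{1} \approx \Phi_{\min}^{-1}$, which is combined with the Lyapunov drift bound \eqref{IneqLyapMain} through a drift-and-minorization theorem to get geometric ergodicity at rate $e^{-t\Phi_{\min}/\mathrm{poly}(\beta)}$, and finally Roberts--Tweedie converts geometric ergodicity of a reversible chain into $\lambda_{\beta} \geq \Phi_{\min}/\mathrm{poly}(\beta)$. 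Without some equivalent of this minorization-plus-drift construction, your argument does not close.

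A secondary error: your chain of inequalities routes the upper bound through $\Phi_{\max}$ and then tries to prove $\log(\Phi_{\max})/\log(\Phi_{\min}) \to 1$ from Connectedness. That comparability does not follow from \eqref{IneqStrongCon} (which constrains \emph{where} the chain lands upon exiting a mode, not \emph{how fast} it exits), and it is not needed: the global conductance is an infimum over admissible sets, so $\Phi_{\beta} \leq \Phi_{\beta}(S^{(i^{*})}) = \Phi_{\min}$ directly for the minimizing mode, which is all the paper uses.
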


\begin{proof}

Define the candidate ``small set" 
\be 
R = \cup_{i=1}^{k} \gd_{\beta}^{(i)}.
\ee 

For convenience, we define $T_{\max} = T_{\max}(\beta) \equiv \Phi_{\min}^{-1.5}$ to be the longest time-scale of interest in this problem; we note that any mixing behaviour should occur on this time scale, while on the other hand there should be no entrances to the ``bad" set 
\be 
W \equiv (\cup_{i=1}^{k} \cov_{\beta}^{(i)})^{c}.
\ee  
In order to reduce notational clutter, we will frequently use $q$ with a subscript to refer to a function that is bounded by a polynomial and whose specific values are not of interest. 

We will begin by estimating the mixing rate of $Q_{\beta}$ for Markov chains started at points $x,y \in R$. We do this by coupling Markov chains $\{X_{t}\}_{t =0}^{T_{\max}}$, $\{Y_{t}\}_{t =0}^{T_{\max}}$ started at $X_{0} = x$, $Y_{0} = y$ for some $x,y \in R$ and trying to force them to collide.\footnote{Note that the Markov chains are only defined up until this ``maximal time" $T_{\max}$. This saves us from having to explicitly write $\min(\cdot, T_{\max})$, or adding extremely small terms that correspond to the  probability that various times exceed $T_{\max}$, error bounds in essentially all of the following calculations. This choice has virtually no other impact.} Roughly speaking, we will make the following two calculations:

\begin{enumerate}
\item If we run the two chains independently, the time it takes for them to both be in $\gd_{\beta}^{(i)}$ for the \textit{same $i$ simultaneously}, is not too much larger than the conjectured relaxation time $\Phi_{\min}^{-1}$. 
\item If we run two chains started on the same good set $\gd_{\beta}^{(i)}$, the two chains will couple long before either one  transitions from $\gd_{\beta}^{(i)}$ to another mode.
\end{enumerate}

We now give some further details, following this sketch. Let $x,y \in R$.

\textbf{Part 1: Time to be in same good set simultaneously.} We will run the chains independently until the first time
\be 
\psi_{1} = \inf \{t \geq 0 \, : \, \exists \, 1 \leq i \leq k \, \text{ s.t. } X_{t}, Y_{t} \in \gd_{\beta}^{(i)} \}
\ee 
that they are both in the same ``good" part of the partition. Define
\be
\psi_{2} = \inf \{t \geq 0 \, : \, \exists \, 1 \leq i \leq k \, \text{ s.t. } X_{t}, Y_{t} \in  S^{(i)} \},
\ee
the first time that $\{X_{t}\}$, $\{Y_{t}\}$ are both in the same part of our partition. For convenience, set $c_{2} = \min(c_{2}^{(1)}, \ldots, c_{2}^{(k)}, 0.5)$. By Inequalities \eqref{IneqMeta1Hit3}, \eqref{IneqNP1}, and \eqref{IneqStrongCon} 

\be \label{IneqMeta2InitHitting1}
\P[\psi_{2} \leq q_{1}(\beta) \Phi_{\min}^{-1}] \geq \frac{1}{2} c_{2} r_{5}(\beta)^{k}
\ee 
for some function $q_{1}$ that is bounded by a polynomial. By Inequality \eqref{IneqNP2}, 
\be 
\P[\psi_{1} = \psi_{2}] \geq 1-  T_{\max} \Phi_{\min}^{4} \geq 1 -  \Phi_{\min}^{2}.
\ee 
Combining this with Inequality \eqref{IneqMeta2InitHitting1},
\be \label{IneqMeta2InitHitting3}
\P[\psi_{1} \leq q_{1}(\beta) \Phi_{\min}^{-1}] \geq \frac{1}{q_{2}(\beta)}
\ee 
for some function $q_{2}$ that is bounded by a polynomial.

\textbf{Part 2: Mixing from same good set.}  If $\psi_{1} \geq T_{\max} - r_{1}(\beta)$, continue to evolve $\{X_{t}\}_{t \geq 0}$, $\{Y_{t}\}_{t \geq 0}$ independently. Otherwise, let $1 \leq i \leq k$ satisfy $X_{\psi_{1}}, \, Y_{\psi_{1}} \in S^{(i)}$. We then let $\{ \hat{X}_{t} \}_{t \geq 0}$, $\{ \hat{Y}_{t} \}_{t \geq 0}$ be Markov chains evolving according to the Metropolis-Hastings kernel with proposal distribution $Q_{\beta}$ and target distribution $\pi_{\beta} |_{S^{(i)}}$. We give these chains  initial points $\hat{X}_{0} = X_{\psi_{1}}$, $\hat{Y}_{0} = Y_{\psi_{1}}$ and couple them according to a maximal $r_{1}(\beta)$-step coupling (that is, a coupling that maximizes $\P[\hat{X}_{r_{1}(\beta)} = \hat{Y}_{r_{1}(\beta)}]$; such a coupling is known to exist \cite{griffeath1975maximal}).

We next observe that the following informal algorithm gives a valid coupling of the Markov chains $\{X_{t}\}_{t = \psi_{1}}^{\psi_{1} + r_{1}(\beta)}$, $\{\hat{X}_{t}\}_{t = 0}^{r_{1}(\beta)}$:
\begin{enumerate}
\item Run the full Markov chain $\{X_{t}\}_{t = \psi_{1}}^{\psi_{1} + r_{1}(\beta)}$ according to $Q_{\beta}$.
\item For all 
\be 
t < \tau_{\mathrm{bad}} \equiv \inf \{s \, : \, X_{\psi_{1} + s} \notin S^{(i)} \},
\ee  set $\hat{X}_{t} = X_{\psi_{1} + t}$.
\item If $ \tau_{\mathrm{bad}} < r_{1}(\beta)$, continue to evolve $\{\hat{X}_{s}\}_{s=\tau_{\mathrm{bad}}}^{r_{1}(\beta)}$ independently of $\{X_{t}\}_{t=0}^{r_{1}(\beta)}$.\footnote{Note that the particular choice made in this third step will not influence the analysis - we could make any measurable choice here.}
\end{enumerate}
We couple the pair of chains $\{X_{t}\}_{t = \psi_{1}}^{\psi_{1} + r_{1}(\beta)}$, $\{\hat{X}_{t}\}_{t = 0}^{r_{1}(\beta)}$ this way, and we couple $\{Y_{t}\}_{t = \psi_{1}}^{\psi_{1} + r_{1}(\beta)}$, $\{\hat{Y}_{t}\}_{t = 0}^{r_{1}(\beta)}$ analogously. Under these couplings, we have:  
\be 
\P[X_{\psi_{1} + r_{1}(\beta)} \neq Y_{\psi_{1} + r_{1}(\beta)}] &\leq \P[\hat{X}_{r_{1}(\beta)} \neq \hat{Y}_{r_{1}(\beta)}] + \P[X_{\psi_{1} + r_{1}(\beta)} \neq \hat{X}_{r_{1}(\beta)}] + \P[Y_{\psi_{1} + r_{1}(\beta)} \neq \hat{Y}_{r_{1}(\beta)}] \\
&\leq \beta^{-2} \Phi_{\max}  + 2 \Phi_{\min}^{4}, 
\ee 
where the first term is bounded by Inequality \eqref{IneqMeta1MixingRestrictions} and the last two terms are bounded by Inequality \eqref{IneqNeverHittingBad2}. 

Combining this with Inequality \eqref{IneqMeta2InitHitting3}, we conclude 
\be \label{IneqMeta2Minor1}
\P[X_{T_{1}} = Y_{T_{1}}] = (1 -o(1)),
\ee 
where $T_{1} = \lceil q_{1}(\beta) \Phi_{\min}^{-1} + r_{1}(\beta) \rceil$. 

This completes the proof of our two-stage analysis, as Inequality \eqref{IneqMeta2Minor1} gives a useful minorization bound for $x,y \in R$. 
Note that Inequality \eqref{IneqMeta2Minor1} is very close to a minorization condition in the sense of \cite{rosenthal1995minorization} for the small set $R$. Applying the closely related Lemma A.11 of \cite{mangoubi2017rapidp1} (on ArXiv, \cite{mangoubi2017concave}), the minorization bound \eqref{IneqMeta2Minor1} and the drift bound in Part \textbf{(2)} of Assumptions \ref{AssumptionsMeta2}, we find 
\be 
\| Q_{\beta}^{t}(x,\cdot) - \pi_{\beta}(\cdot) \|_{\mathrm{TV}} \leq M(\beta,x) e^{-\frac{t \Phi_{\min}}{q_{4}(\beta)}},
\ee 
where $q_{4}(\cdot) \geq 1$ and, for each $x$, $M(\cdot,x)$, are bounded by a polynomial.

By Theorem 2.1 of \cite{roberts1997geometric}, this implies

\be \label{IneqPrev21}
\lambda_{\beta} \geq  \frac{\Phi_{\min}}{q_{4}(\beta)}.
\ee

By Equation \eqref{IneqCheegPoin}, the conductance $\Phi_{\beta}$ of $Q_{\beta}$ satisfies

\be \label{IneqPrev22}
\lambda_{\beta} \leq 2 \Phi_{\beta} \leq 2 \Phi_{\min}.
\ee

Combining Inequalities \eqref{IneqPrev21} and \eqref{IneqPrev22}, we conclude 

\be 
 \frac{\Phi_{\min}}{q_{4}(\beta)} \leq  \frac{\Phi_{\beta}}{q_{4}(\beta)}, \, \lambda_{\beta} \leq 2 \Phi_{\min} \leq  2 \Phi_{\beta}.
\ee
 This immediately implies the limit in Equation \eqref{EqSpecGapCondConc}, completing the proof of the theorem.

\section{Application to Mixtures of Gaussians} \label{SecAppl}

We define the usual random-walk Metropolis algorithm:

\begin{defn} [Random Walk Metropolis Algorithm] \label{DefRWMAlg}
The transition kernel $K$ of the \textit{random walk Metropolis algorithm} with step size $\sigma > 0$ and target distribution $\pi$ on $\mathbb{R}^{d}$ with density $\rho$ is given by the following algorithm for sampling $X \sim K(x,\cdot)$:
\begin{enumerate}
\item Sample $\epsilon_{1} \sim \mathcal{N}(0,\sigma^{2})$ and $U_{1} \sim \mathrm{Unif}[0,1]$.
\item If 
\be 
U < \frac{\rho(x + \epsilon_{1})}{\rho(x)},
\ee 
set $X = x + \epsilon_{1}$. Otherwise, set $X=x$.
\end{enumerate}

\end{defn}

For $\sigma > 0$, define the mixture distribution 
\be  \label{EqDefMixDistBasic}
\pi_{\sigma} = \frac{1}{2} \mathcal{N}(-1,\sigma^{2}) + \frac{1}{2} \mathcal{N}(1,\sigma^{2})
\ee  
and denote its density by $f_{\sigma}$. Let $K_{\sigma}$ be the kernel from Definition \ref{DefRWMAlg} with step size $\sigma$ and target distribution $\pi_{\sigma}$.

Denote by $\lambda_{\sigma}$ the relaxation time of $K_{\sigma}$ (the reciprocal of the spectral gap of $K_\sigma$), and denote by $\Phi_{\sigma} = \Phi(K_{\sigma}, (-\infty,0))$ the Cheeger constant associated with kernel $K_{\sigma}$ and set $(-\infty,0)$.

We will state our two main results about this walk; the proofs are deferred until both results have been stated. First, we have an asymptotic formula for the Cheeger constant:

\begin{thm} \label{ThmRwmMultimodal1}
The Cheeger constant $\Phi_{\sigma}$ satisfies 
\be \label{EqMultiCheegAsymRwm}
\lim_{\sigma \rightarrow 0} (-2 \sigma^{2}) \, \log(\Phi_{\sigma}) = 1.
\ee 
\end{thm}

For fixed $x \in (-\infty,0)$, let $\{X_{t}^{(\sigma)}\}_{t \in \mathbb{N}}$ be a Markov chain with transition kernel $K_{\sigma}$ and initial point $X_{1}^{(\sigma)} = x$. Define the hitting time 
\be \label{EqDefTauSigmaX}
\tau_{x}^{(\sigma)} = \inf \{ t > 0 \, : \, X_{t}^{(\sigma)} \notin (-\infty,0)\}.
\ee

We also have the following estimate of the spectral gap and the hitting time:

\begin{thm} \label{ThmRwmMultimodal2}
For all $\epsilon > 0$ and fixed $x \in (-\infty,0)$, the hitting time $\tau_{x}^{(\sigma)}$ satisfies 
\be \label{EqMultiCheegHittingRwm}
\lim_{\sigma \rightarrow 0} \P[\frac{\log(\tau_{x}^{(\sigma)})}{\log(\Phi_{\sigma})} < 1 + \epsilon] = 1
\ee 
and the relaxation time satisfies 
\be  \label{IneqRelMultiRwm}
\lim_{\sigma \rightarrow 0} \frac{\log(\lambda_{\sigma})}{\log(\Phi_{\sigma})} = \lim_{\sigma \rightarrow 0} \frac{\log(\lambda_{\sigma})}{\log(\Phi(K_{\sigma}))} = 1.
\ee 

\end{thm}

\begin{remark}
This result implies that the Cheeger constant $\Phi(K_{\sigma})$ of $K_{\sigma}$ is close to the bottleneck ratio $\Phi_{\sigma} = \Phi(K_{\sigma}, (-\infty,0))$ associated with the set $(-\infty,0)$, at least for $\sigma$ very small. The set $(-\infty,0)$ is of course a natural guess for the set with the ``worst" conductance, though we do not know of any simple argument that would actually prove this. In some sense this is the motivation for the approach taken in this paper: it can be very hard to guess a good partition, even in a very simple example!
\end{remark}

We begin by proving Theorem \ref{ThmRwmMultimodal1}:

\begin{proof}[Proof of Theorem \ref{ThmRwmMultimodal1}]

Let  $\{X_{t}\}_{t \geq 0}$ be a Markov chain with transition kernel $K_{\sigma}$ and started at  $X_{0} \sim \pi_{\sigma}$ drawn according to the stationary distribution. Denote by $\phi_{\sigma}$  the density of the Gaussian with variance $\sigma^{2}$. Defining the set $\mathcal{E} = \{X_{0} < -\sigma^{-1} \} \cup \{|X_{1} - X_{0}| > \sigma^{-1} \}$, we have

\be 
\P[\{X_{0} < 0\} \cap \{\, X_{1} > 0\} \cap \mathcal{E}^{c}] &\leq \int_{-\sigma^{-1}}^{0} \int_{0}^{\sigma^{-1}} f_{\sigma}(x) \phi_{\sigma}(y-x) dx dy \\
& \leq 2\int_{-\sigma^{-1}}^{0} \int_{0}^{\sigma^{-1}}  \phi_{\sigma}(1+x) \phi_{\sigma}(y-x) dx dy \\
&= \frac{2}{\pi \sigma^{2}}\int_{-\sigma^{-1}}^{0} \int_{0}^{\sigma^{-1}}  e^{-\frac{1}{2 \sigma^{2}}( (1+x)^{2} + (y-x)^{2})} dx dy \\
&\leq \frac{2}{\pi \sigma^{2}} \int_{-\sigma^{-1}}^{0} \int_{0}^{\sigma^{-1}}  e^{-\frac{1}{2 \sigma^{2}}} = \frac{2}{\pi \sigma^{4}} e^{-\frac{1}{2 \sigma^{2}}}.
\ee 

We also have the simple bound
\be
\P[\mathcal{E}] &\leq \P[X_{0} < - \sigma^{-1}] + \P[|X_{1} - X_{0} | > \sigma^{-1}] \\
&\leq 2 \int_{-\infty}^{-\sigma^{-1}} \phi_{\sigma}(x) dx + 2 \int_{\sigma^{-1}}^{\infty} \phi_{\sigma}(x) dx \\
&\leq \frac{4}{\sqrt{2 \pi} \sigma} e^{-\frac{(\sigma^{-1}-1)^{2}}{2 \sigma^{2}}} \leq \frac{4}{\sqrt{2 \pi} \sigma} e^{-\frac{1}{3} \sigma^{-3}},
\ee
where the last inequality holds for all $\sigma$ sufficiently small. Putting these two bounds together, we have for all $\sigma > 0$ sufficiently small that 
\be 
\P[X_{0} < 0, \, X_{1} > 0] \leq \frac{1}{\pi \sigma^{4}} e^{-\frac{1}{2 \sigma^{2}}} + \frac{4}{\sqrt{2 \pi} \sigma} e^{-\frac{1}{3} \sigma^{-3}}.
\ee 
Taking logs, this immediately proves
\be 
\lim_{\sigma \rightarrow 0} (-2 \sigma^{2}) \, \log(\Phi_{\sigma}) \leq 1,
\ee 
 the desired upper bound on the left-hand side of  Inequality \eqref{EqMultiCheegAsymRwm}. To prove the lower bound on this quantity, begin by defining the intervals $I_{\sigma} = (-2 \sigma^{20}, - \sigma^{20})$ and $J_{\sigma} = (\sigma^{10},2 \sigma^{10})$. Since $\sigma^{20} \ll \sigma^{10}$ for $\sigma$ small, we have for sufficiently small $\sigma > 0$ 
\be 
\inf_{x \in I_{\sigma}, y \in J_{\sigma}} \frac{f_{\sigma}(y)}{f_{\sigma}(x)} \geq 1. \ee 
Informally, this means that any proposed step from $I_{\sigma}$ to $J_{\sigma}$ will be accepted. Thus, letting $Y \sim \mathcal{N}(0,\sigma^{2})$ be independent of $X_{1}$, we have
\be 
\P[X_{0} < 0, X_{1} > 0] &\geq \P[ X_{0} \in I_{\sigma}, X_{1} \in J_{\sigma}] \\
&\geq \P[X_{0} \in I_{\sigma}] \inf_{x \in I_{\sigma}} \P[ X_{0} + Y \in J_{\sigma} \, | \, X_{0} = x] \\
&\geq \left(\frac{\sigma^{20}}{4 \sqrt{2 \pi}} e^{-\frac{1}{2 \sigma^{2}}} \right) \times \left( \frac{\sigma^{10}}{4 \sqrt{2 \pi}} \right),
\ee 
where the last inequality holds for all $\sigma > 0$ sufficiently small. Taking logs, this proves
\be 
\lim_{\sigma \rightarrow 0} (-2 \sigma^{2}) \, \log(\Phi_{\sigma}) \geq 1,
\ee 
completing the proof of Inequality \eqref{EqMultiCheegAsymRwm}.
\end{proof}

\begin{proof}[Proof of Theorem \ref{ThmRwmMultimodal2}]
We defer some of the longer exact calculations in this proof to Appendix \ref{SecAppendix}, retaining here the key steps that might be used to prove similar metastability results for other Markov chains. To prove Theorem \ref{ThmRwmMultimodal2}, it is enough to verify Assumptions \ref{AssumptionsMeta1} and \ref{AssumptionsMeta2} for the sets 
\be 
S^{(1)} = (-\infty,0), \qquad S^{(2)} = [0,\infty),
\ee 
with decomposition of $S^{(1)}$
\be 
\gd_{\beta}^{(1)} = (-\sigma^{-9},0), \quad \cov_{\beta}^{(1)} = (-\sigma^{-10},0), \quad \bd_{\beta}^{(1)} = (-\infty, -\sigma^{-10})
\ee 
and $\gd_{\beta}^{(2)}$, $\cov_{\beta}^{(2)}$, $\bd_{\beta}^{(2)}$ defined analogously (see Figure \ref{FigPartitionEx}). Note that Part \textbf{(1)} of Assumptions \ref{AssumptionsMeta1} follows immediately from Inequality \eqref{EqMultiCheegAsymRwm}, which we have already proved. 

\begin{figure}[H]
\includegraphics[scale=0.5]{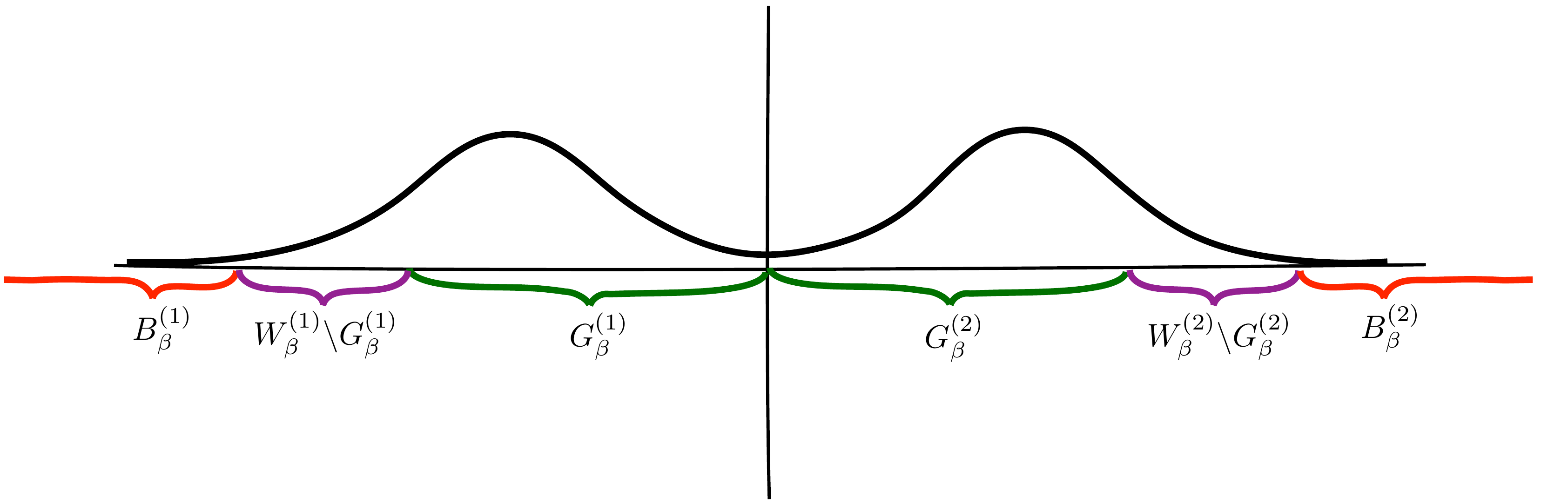}
\caption{A cartoon plot of the target density $\mu_{\sigma}$ with the regions illustrated. Note that we have substantially compressed the regions; in a to-scale drawing, $\bd_{\beta}$ would not be visible. \label{FigPartitionEx}}
\end{figure}

Denote by $\hat{K}_{\sigma}$ the Metropolis-Hastings transition kernel on $(-\infty,0)$ that has as its proposal kernel $K_{\sigma}$ and as its target distribution the density 
\be  
\hat{\rho}_{\sigma}(x) = 2 f_{\sigma}(x), \quad x \in (-\infty,0).
\ee

We begin by proving some stronger Lyapunov-like bounds for $K_{\sigma}$ and $\hat{K}_{\sigma}$:

\begin{lemma} \label{LemmaLyapunovEx}
Let $V_{\sigma}(x) = e^{\sigma^{-1} \min(\|x - 1\|, \|x+1\|)}$. Then there exist $0 < \alpha \leq 1$, $0 \leq M,C < \infty$ so that for all $K \in \{K_{\sigma}, \hat{K}_{\sigma}\}$ and $x \in (-\infty,-M \sigma)$,
\be \label{IneqLyapMultiMH}
(K V_{\sigma})(x) \leq (1 - \alpha) V_{\sigma}(x) + C.
\ee 
Furthermore, Part \textbf{(2)} of Assumptions \ref{AssumptionsMeta2} holds.
\end{lemma}

\begin{proof}
Proof is deferred to Appendix \ref{SecAppendix}. 
\end{proof}

We next check the main condition:

\begin{thm} \label{ThmMulti2AppMainMixingMode}
With notation as above, Part \textbf{(2)} of  Assumptions \ref{AssumptionsMeta1} is satisfied.
\end{thm}

\begin{proof}

We begin with a weak estimate of mixing from within a good set: 

\begin{lemma} \label{LemmaInitialModeMixingEstimateMH}
Fix $0 < \delta < \frac{1}{20}$. With notation as above, there exist some constants $0 < a_{1}, A_{1} < \infty$ so that 
\be  \label{IneqMixingBulkHatMH}
\sup_{-\sigma^{-11} < x,y < -\delta} \| \hat{K}_{\sigma}^{T}(x,\cdot) -  \hat{K}_{\sigma}^{T}(y,\cdot) \|_{\mathrm{TV}} \leq A_{1} e^{-a_{1} \sigma^{-1}}
\ee
for $T > A_{1} \sigma^{-a_{1}}$.
\end{lemma}

\begin{proof}
Proof is deferred to Appendix \ref{SecAppendix}. 
\end{proof}

Note that this bound is not good enough for our conclusions, since our upper bound $e^{-a_{1} \sigma^{-1}}$ is still very large compared to the conductance of interest. We improve the bound by iterating it several times:

\begin{lemma} \label{LemmaIterateHatMixingMH}
Fix $0 < \delta < \frac{1}{20}$. There exist constants $0 < a_{2},A_{2} < \infty$ depending only on $\delta$ so that 
\be  \label{EqVeryStrongHatMixing2MH}
\sup_{-\sigma^{-9} < x,y < -\delta} \| \hat{K}_{\sigma}^{S}(x,\cdot) -  \hat{K}_{\sigma}^{S}(y,\cdot) \|_{\mathrm{TV}} \leq A_{2} e^{-a_{2} \sigma^{-5}}
\ee
for $S = \lceil A_{2} \sigma^{-a_{2}} \rceil$. Furthermore, there exist  constants $0 < a_{3}, A_{3} < \infty$ such that

\be \label{IneqEscapeSmallSet}
\sup_{x \in (-\delta,0)} \P[ \tau_{x,(-\delta,0)^{c}} < A_{3} \sigma^{-a_{3}}] \geq 1- e^{-\sigma^{-10}}.
\ee 

\end{lemma}

\begin{proof}
Proof is deferred to Appendix \ref{SecAppendix}. 
\end{proof}

Fix $\delta = 0.01$. Combining Inequality \eqref{EqVeryStrongHatMixing2MH} with the bound \eqref{IneqEscapeSmallSet} on the length of excursions above $- \delta$ completes the proof of the theorem.

\end{proof}

\begin{lemma} 
With notation as above, Parts \textbf{(3-4)} of Assumptions \ref{AssumptionsMeta1} and Part \textbf{(3)} of Assumptions \ref{AssumptionsMeta2} hold.
\end{lemma}

\begin{proof}
These all follow immediately from Lemma \ref{LemmaLyapunovEx} and the definition of our partition.
\end{proof}

Next, note that Part \textbf{(1)} of  Assumptions \ref{AssumptionsMeta2} holds by the symmetry of $S^{(1)}, S^{(2)}$ and the fact that we have already checked Assumptions \ref{AssumptionsMeta1}.

Thus, it remains only to check Part \textbf{(4)} of Assumptions \ref{AssumptionsMeta2}:

\begin{lemma}
With notation as above, Part \textbf{(4)} of Assumptions \ref{AssumptionsMeta2} holds.
\end{lemma} 

\begin{proof}
It is immediately clear that $K_{\sigma}(x,(-\infty,x]) \geq \frac{1}{2}$ for all $x \in \mathbb{R}$, which implies Inequality \eqref{IneqNP1}. Standard Gaussian inequalities imply
\be 
\sup_{x < 0} K_{\sigma}(x,(\sigma^{10},\infty)) \leq e^{-\sigma^{9}}
\ee 
for $\sigma < \sigma_{0}$ sufficiently small. Combining this with Inequality \eqref{EqMultiCheegAsymRwm}  completes the proof of Inequality \eqref{IneqNP2}.
\end{proof}

Since we have verified all the assumptions of Lemma \ref{LemmaMeta1} and \ref{LemmaMeta2}, applying them completes the proof of Theorem \ref{ThmRwmMultimodal2}. 

\end{proof}

\end{proof}

\section*{Acknowledgement}
We would like to thank Neil Shephard and Gareth Roberts for some questions about multimodal distributions and helpful comments on initial results.

\bibliographystyle{alpha}
\bibliography{HMC}

\appendix 

\section{Technical Bounds from the Proof of Theorem \ref{ThmRwmMultimodal2}} \label{SecAppendix}

We prove some technical lemmas that occur in the proof of Theorem \ref{ThmRwmMultimodal2}.

\begin{proof} [Proof of Lemma \ref{LemmaLyapunovEx}]
Let $a = a_{\sigma}$ be the unique local minimum of $f_{\sigma}$ in the interval $(-2, -0.5)$ (it is clear one such exists for all $\sigma > \sigma_{0}$ sufficiently large and that $a_{\sigma}$ is within distance $O(e^{-\frac{1}{3 \sigma^{2}}})$ of $-1$). Let $Q_{\sigma}$ be the transition kernel given in Definition \ref{DefRWMAlg} with step size $\sigma$ and target distribution $\mathcal{N}(a,4\sigma^{2})$. Let $L_{\sigma}(x) = e^{-\sigma^{-1} \|x-1\|}$. By a standard  computation (\textit{e.g.,} keeping track of the constants in the proof of Theorem 3.2 of \cite{mengersen1996rates}), there exist $0 < \alpha \leq 1$, $0 \leq C < \infty$ so that 
\be 
(Q_{\sigma} L_{\sigma})(x) \leq (1-\alpha)L(x) + C
\ee 
for all $x \in \mathbb{R}$ and $Q \in \{Q_{\sigma}, \hat{Q}_{\sigma}\}$. Next, observe that
\be 
\inf_{x \in (-\infty,-10 \sigma)} \frac{d^{2}}{dx^{2}} -\log(f_{\sigma}(x)) \geq \frac{1}{8 \sigma^{2}}.
\ee 
In particular, $f_{\sigma}$ is strongly log-concave on the interval $(-\infty,-10 \sigma)$ with the same parameter as the density of $\mathcal{N}(a,4\sigma^{2})$. Thus, if we fix $M > 0$ and $x \in (-\infty,-M \sigma)$ and let $X \sim K_{\sigma}(x,\cdot)$, we have for $K \in \{K_{\sigma}, \hat{K}_{\sigma}\}$
\be \label{LyapProofPt1}
(K V_{\sigma})(x) &\leq (Q_{\sigma} L_{\sigma})(x) + \E[V_{\sigma}(X) \textbf{1}_{X >  -10 \sigma}] \\
&\leq (1 -\alpha) L_{\sigma}(x) + C + \E[V_{\sigma}(X) \textbf{1}_{X > -10 \sigma}] \\ 
&= (1 -\alpha) V_{\sigma}(x) + C + \E[V_{\sigma}(X) \textbf{1}_{X >  -10 \sigma}].
\ee 
Let $Y \sim \mathcal{N}(0,\sigma^{2})$. As $M \rightarrow \infty$, for $K = \hat{K}_{\sigma}$ we can then bound the last term by
\be \label{LyapProofPt2}
\E[V_{\sigma}(X) \textbf{1}_{X >  -10 \sigma}] \leq V_{\sigma}(x) \E[e^{\sigma^{-1}Y} \textbf{1}_{x+ Y \in [-10\sigma,0] }] = (1+o(1)) V_{\sigma}(x).
\ee 
Combining Inequalities \eqref{LyapProofPt1} and \eqref{LyapProofPt2} completes the proof of Inequality \eqref{IneqLyapMultiMH} in the case $K = \hat{K}_{\sigma}$. In the case $K = K_{\sigma}$, we replace Inequality \eqref{LyapProofPt2} by the similar bound:
\be 
\E[V_{\sigma}(X) \textbf{1}_{X >  -10 \sigma}] \leq V_{\sigma}(x) \E[e^{\sigma^{-1}Y} \textbf{1}_{x+ Y \in [-10\sigma,10 \sigma] }] + V_{\sigma}(x) \P[x+Y > 10 \sigma] = (1+o(1)) V_{\sigma}(x)
\ee 
to obtain the same conclusion.

Finally, Part \textbf{(2)} of Assumptions \ref{AssumptionsMeta2} immediately follows from Inequality \eqref{IneqLyapMultiMH} in the case $K = K_{\sigma}$ and the trivial inequality 
\be 
\sup_{|x| \leq M \sigma} (K_{\sigma} V_{\sigma})(x) \leq e^{\sigma^{-3}}
\ee 
for any fixed $M$ and all $\sigma < \sigma_{0} = \sigma_{0}(A)$ sufficiently small.
\end{proof}

\begin{proof} [Proof of Lemma \ref{LemmaInitialModeMixingEstimateMH}]

Fix $\alpha, C$ as in Lemma \ref{LemmaLyapunovEx} and let $\mu$ be the uniform distribution on the interval $I = [-1-\frac{10 C}{\alpha} \sigma, -1 + \frac{10 C}{\alpha} \sigma]$. We note that $\hat{K}_{\sigma}$ inherits the following \textit{minorization condition} from the standard Gaussian:
\be \label{IneqSimpleMinGauss}
\inf_{x \in I} \inf_{J \subset I} \hat{K}_{\sigma}(x,J) \geq \epsilon \mu(J)
\ee 
for some $\epsilon > 0$ that does not depend on $\sigma$. 

Fix $-\sigma^{-10} < x,y < -\delta$. Applying the popular ``drift-and-minorization" bound in Section 10 of \cite{meyn1994computable}, using the ``drift" bound in Inequality \eqref{IneqLyapMultiMH} and the ``minorization" bound in Inequality \eqref{IneqSimpleMinGauss} gives a bound of the form:
\be \label{IneqProtoInitMixing}
\sup_{-\sigma^{-10} < x,y < -\delta}  \| \hat{K}_{\sigma}^{T}(x,\cdot) -  \hat{K}_{\sigma}^{T}(y,\cdot) \|_{\mathrm{TV}} \leq B_{1} e^{-b_{1} \sigma^{-1}} + 2 \sup_{-\sigma^{-10} < x < -\delta} \P[\tau_{x, (-M \sigma,\infty)} < t]
\ee
for all $T > B_{1} \sigma^{-b_{1}}$, where $0 < b_{1}, B_{1}$ are constants that do not depend on $\sigma$. Note that the second term on the right-hand side, which does not appear in \cite{meyn1994computable}, represents the possibility that a Markov chain ever escapes from the set $(-\infty,-M \sigma)$ on which the drift bound \eqref{IneqLyapMultiMH} holds.

Fix $-\sigma^{-10} < x < -\delta$ and let $\{X_{t}\}_{t \geq 0}$ be a Markov chain with transition kernel $\hat{K}_{\sigma}$ and starting point $X_{0} = x$. Let $\tau = \inf \{t \geq 0 \, : \, X_{t} > -M \sigma \}$. By \eqref{IneqLyapMultiMH}, we have for all $t \in \mathbb{N}$
\be 
\E[V_{\sigma}(X_{t}) \textbf{1}_{\tau \geq t-1}] \leq (1-\alpha)^{t} V_{\sigma}(X_{0}) + \frac{C}{\alpha}.
\ee 
Thus, by Markov's inequality,
\be 
\P[\tau \leq t] &\leq e^{-\sigma^{-1} (-M \sigma + 1)} \sum_{s=0}^{t} ((1-\alpha)^{t} V_{\sigma}(X_{0}) + \frac{C}{\alpha}) \\
&\leq  t \, e^{-\sigma^{-1} (-M \sigma + 1)} (e^{\sigma^{-1} (-\delta+1)} + \frac{C}{\alpha}).
\ee 
Combining this with Inequality \eqref{IneqProtoInitMixing} completes the proof of the lemma.
\end{proof}

\begin{proof}[Proof of Lemma \ref{LemmaIterateHatMixingMH}]

We denote by $\{X_{t}\}_{t \geq 0}$ a Markov chain with transition kernel $\hat{K}_{\sigma}$ and some starting point $X_{0} = x$. To improve on the bound in Lemma \ref{LemmaInitialModeMixingEstimateMH}, we must control what can occur when coupling does not happen quickly. There are two possibilities to control: the possibility that $\{X_{t}\}_{t \geq 0}$ goes above $- \delta$, and the possibility that it goes below $-\sigma^{-10}$. The latter is easier to control; by Inequality \eqref{IneqLyapMultiMH} and Markov's inequality, for all $\epsilon > 0$ there exist constants $c_{1} = c_{1}(\epsilon), C_{1} = C_{1}(\epsilon) > 0$ so that  

\be  \label{IneqVeryLowExcursionsVeryUnlikely}
\sup_{|X_{0}| \leq \sigma^{-\beta}} \P[\min_{1 \leq t \leq e^{\sigma^{-\beta}}} X_{t} < -\sigma^{-\beta - \epsilon}] &\leq e^{\sigma^{-\beta}} \, \sup_{|X_{1}| \leq \sigma^{-\beta}} \sup_{1 \leq t \leq e^{\sigma^{-\beta}}} \frac{\E[e^{|X_{t}|}]}{e^{\sigma^{-\beta - \epsilon}}} \\
&\leq e^{\sigma^{-\beta}} \left( \frac{e^{\sigma^{-\beta}} + \alpha^{-1} C}{e^{\sigma^{-\beta - \epsilon}}} \right) \\
&\leq C_{1} e^{-c_{1} \sigma^{-\beta - \epsilon}}
\ee
uniformly in $\beta \geq 1$.

The possibility that $\{X_{t} \}_{t \geq 0}$ goes above $- \delta$ cannot be controlled in the same way, because it does not have negligible probability on the time scale of interest. Instead, we use the fact that $X_{t}$ will generally exit the interval $(- \delta, 0)$ fairly quickly, often to the interval $(-\infty, - \delta)$. 

To see this, fix $x \in (- \delta, 0)$ and let $\{X_{t}\}_{t \geq 0}$ have starting point $X_{0} = x$. Next, let $\{\epsilon_{t}\}_{t \geq 1}$ be a sequence of i.i.d. $\mathcal{N}(0,\sigma^{2})$ random variables and let $Y_{t} = X_{0} + \sum_{s=1}^{t} \epsilon_{t}$. For $I \subset \mathbb{R}$, let 
\be 
\psi_{x,I} = \inf \{t \geq 0 \, : \, Y_{t} \in I\}
\ee 
be the hitting time of $I$ for the Markov chain $\{Y_{t}\}_{t \geq 0}$. Observing the forward mapping representation of $K_{\sigma}$ in Definition \ref{DefRWMAlg} and that $f_{\sigma}$ is monotone on $(- \delta, 0)$, it is clear that we can couple $\{X_{t}\}_{t \geq 0}$, $\{Y_{t}\}_{t \geq 0}$ so that 
\be  \label{IneqStochasticDomination}
X_{t} \leq Y_{t}, \quad \forall \, 0 \leq t \leq \min(\tau_{x,(-\delta,0)^{c}}, \psi_{x,(-\delta,0)^{c}}).
\ee 

But by standard calculations for simple random walk, \footnote{To see the first inequality, note that a direct calculation for Gaussians gives $\sup_{x \in (-\delta,0)} \P[\psi_{x,(-\delta,0)^{c}} > C_{2}' \sigma^{-c_{2}'}] > C_{2}'' > 0$ for some $C_{2}', c_{2}' C_{2}'' > 0$; applying the strong Markov property to iterate this bound as in the proof of Inequality \eqref{IneqMeta1Hit3} gives the desired conclusion. The second inequality follows from the observation that $\P[Y_{1} > C_{3}' \sigma] > C_{3}'' > 0$ for some constants $C_{3}', C_{3}'' > 0$ and then the well-known ``gambler's ruin" calculation (see \textit{e.g.} Section 10.14.4 of \cite{resnick2013probability}). } 
\be 
\sup_{x \in (-\delta,0)} \P[\psi_{x,(-\delta,0)^{c}} > C_{2} \sigma^{-c_{2}}] \leq \sigma^{2}, \quad \inf_{x \in (-\delta,0)} \P[Y_{\psi_{x,(-\delta,0)^{c}}} < -\delta] > C_{3} \sigma
\ee

for some constants $c_{2}, C_{2}, C_{3}$ that do not depend on $\sigma$. Combining this with Inequality \eqref{IneqStochasticDomination} and noting that $\{X_{t}\}_{t \geq 0}$ never exits $(-\infty,0)$ by construction, we find
\be  
\sup_{x \in (-\delta,0)} \P[ \tau_{x,(-\delta,0)^{c}} < C_{2} \sigma^{-c_{2}}] \geq C_{3} \sigma - \sigma^{2} = C_{3}(\sigma) (1 - o(1)).
\ee

Noting that these bounds are uniform over the starting point $X_{0} \in (-\delta,0)$, we find for $k \in \mathbb{N}$
\be 
\sup_{x \in (-\delta,0)} \P[ \tau_{x,(-\delta,0)^{c}} < k \,C_{2} \sigma^{-c_{2}}] \geq 1 - (1 - C_{3}(\sigma) - \sigma^{2})^{k}.
\ee 
Taking $k$ very large ($k > \sigma^{-12}$ suffices) gives

\be 
\sup_{x \in (-\delta,0)} \P[ \tau_{x,(-\delta,0)^{c}} < C_{4} \sigma^{-c_{4}}] \geq 1- e^{-\sigma^{-10}}
\ee 
for some constants $0 \leq c_{4}, C_{4} < \infty$, which is exactly Inequality \eqref{IneqEscapeSmallSet}

Combining the bound  \eqref{IneqMixingBulkHatMH} on the mixing of $\hat{K}_{\sigma}$ on $(-\sigma^{-11},-\delta)$ with the bound \eqref{IneqVeryLowExcursionsVeryUnlikely} on the possibility of excursions below $-\sigma^{-11}$ and the bound \eqref{IneqEscapeSmallSet} on the length of excursions above $- \delta$ completes the proof of the lemma.

\end{proof}

\end{document}